\documentclass[twoside,11pt]{amsart}

\usepackage[cmtip,all]{xy}
\usepackage{xspace, enumerate, times, color}
\usepackage{amssymb, hyperref}

\usepackage{graphicx}
\input xypic
\input xy
\xyoption{all}
\setlength{\topmargin}{-.1in} \setlength{\oddsidemargin}{0.3in}
\setlength{\evensidemargin}{0.3in} \setlength{\textheight}{8.5in}
\setlength{\textwidth}{6in}

\def\sqr#1#2{{\vcenter{\hrule height.#2pt
        \hbox{\vrule width.#2pt height#1pt \kern#1pt
                \vrule width.#2pt}
        \hrule height.#2pt}}}

\numberwithin{equation}{section}

\newtheorem{theorem}{theorem}[section]
\newtheorem{lemma}[theorem]{Lemma}

\newtheorem{open Problem}[theorem]{Open Problem}

\newtheorem{problem}[theorem]{Problem}

\newcommand{\be}{\begin{equation*}}
\newcommand{\ee}{\end{equation*}}
\newcommand{\bee}{\begin{equation}}
\newcommand{\eee}{\end{equation}}

\definecolor{lighterorange}{cmyk}{0,0.42,0.66,0.0}

\title[On a problem of partitions of $\mathbb{Z}_{m}$ with the same representation functions]{On a problem of partitions of $\mathbb{Z}_{m}$ with the same representation functions}
\author{Cui-Fang SUN and Meng-Chi XIONG}

\begin{document}

\date{2020-6-30\\E-mail:  cuifangsun@163.com,\;\;  mengchixiong@126.com}

\maketitle

\begin{abstract}
For any positive integer $m$, let $\mathbb{Z}_{m}$ be the set of residue classes modulo $m$. For $A\subseteq \mathbb{Z}_{m}$ and $\overline{n}\in \mathbb{Z}_{m}$, let representation function $R_{A}(\overline{n})$ denote the number of solutions of the equation $\overline{n}=\overline{a}+\overline{a'}$ with ordered pairs $(\overline{a}, \overline{a'})\in A \times A$. In this paper, we determine all sets $A, B\subseteq \mathbb{Z}_{m}$ with $A\cup B=\mathbb{Z}_{m}$ and $|A\cap B|=2$ or $m-2$ such that  $R_{A}(\overline{n})=R_{B}(\overline{n})$ for all $\overline{n}\in \mathbb{Z}_{m}$. We also prove that if $m$ is a positive integer with $4|m$, then there exist two distinct sets $A, B\subseteq \mathbb{Z}_{m}$ with $A\cup B=\mathbb{Z}_{m}$ and $|A\cap B|=4$ or $m-4$, $B\neq A+\overline{\frac{m}{2}}$ such that $R_{A}(\overline{n})=R_{B}(\overline{n})$ for all $\overline{n}\in \mathbb{Z}_{m}$. If $m$ is a positive integer with $2\|m$, $A\cup B=\mathbb{Z}_{m}$ and $|A\cap B|=4$ or $m-4$, then $R_{A}(\overline{n})=R_{B}(\overline{n})$ for all $\overline{n}\in \mathbb{Z}_{m}$ if and only if $B=A+\overline{\frac{m}{2}}$.

\noindent{{\bf Keywords:}\hspace{2mm} Representation function, partition, residue class.}
\end{abstract}

\maketitle

\section{Introduction}
Let $\mathbb{N}$ be the set of all nonnegative integers. For $S\subseteq \mathbb{N}$ and $n\in S$, let the representation function $R'_{S}(n)$ denote the number of solutions of the equation $n=s+s'$ with $s, s'\in S$. S\'{a}rk\H{o}zy asked whether there exist two sets $A, B\subseteq \mathbb{N}$ with $|(A\cup B)\backslash (A\cap B)|=\infty$ such that $R'_{A}(n)=R'_{B}(n)$ for all sufficiently large integers $n$. In 2002, Dombi \cite{D} showed that the answer is negative. There are many other related results (see \cite{CB, K, KS, LT, TC, TL} and the references therein).

For a positive integer $m$, let $\mathbb{Z}_{m}$ be the set of residue classes modulo $m$. For any residue classes $\overline{a}, \overline{b}\in \mathbb{Z}_{m}$, there exist two integers $a', b'$ with $0\leq a', b'\leq m-1$ such that $\overline{a'}=\overline{a}$ and $\overline{b'}=\overline{b}$.  We define the ordering $\overline{a}\leq \overline{b}$ if $a'\leq b'$. For any $\overline{n}\in \mathbb{Z}_{m}$, without loss of generality, we may suppose that $0\leq n\leq m-1$. For $A\subseteq \mathbb{Z}_{m}$ and $\overline{n}\in \mathbb{Z}_{m}$, let $R_{A}(\overline{n})$ denote the number of solutions of $\overline{n}=\overline{a}+\overline{a'}$ with $\overline{a}, \overline{a'}\in A$. For $\overline{n}\in \mathbb{Z}_{m}$ and $A\subseteq \mathbb{Z}_{m}$, let $\overline{n}+A=\{\overline{n}+\overline{a}: \overline{a}\in A\}$. For $A, B\subseteq \mathbb{Z}_{m}$ and $\overline{n}\in \mathbb{Z}_{m}$, let $R_{A, B}(\overline{n})$ be the number of solutions of $\overline{n}=\overline{a}+\overline{b}$ with $\overline{a}\in A$ and $\overline{b}\in B$.
The characteristic function of $A\subseteq \mathbb{Z}_{m}$ is denoted by
$$\chi_{A}(n)=\begin{cases}1  &\overline{n}\in A,\\
0 & \overline{n}\not\in A.
\end{cases}$$

In 2012, Yang and Chen \cite{YC} determined all sets $A, B\subseteq \mathbb{Z}_{m}$ with $|(A\cup B)\backslash (A\cap B)|=m$ such that $R_{A}(\overline{n})=R_{B}(\overline{n})$ for all $\overline{n}\in \mathbb{Z}_{m}$. In 2017, Yang and Tang \cite{YT} determined all sets $A, B\subseteq \mathbb{Z}_{m}$ with $|(A\cup B)\backslash (A\cap B)|=2$ or $m-1$ such that  $R_{A}(\overline{n})=R_{B}(\overline{n})$ for all $\overline{n}\in \mathbb{Z}_{m}$. Yang and Tang \cite{YT} also posed the following problem for further research.

\begin{problem}\label{prob1}
 Given a positive even integer $m$ and an integer $k$ with $2\leq k\leq m-1$. Do there exist two distinct sets $A, B\subseteq \mathbb{Z}_{m}$ with $|A|=|B|=k$ and $B\neq A+\{\frac{m}{2}\}$ such that $R_{A}(\overline{n})=R_{B}(\overline{n})$ for all $\overline{n}\in \mathbb{Z}_{m}$?
\end{problem}

 In this paper, we consider for which positive even integers $m$ there exist two distinct sets $A, B\subseteq\mathbb{Z}_{m}$ with $A\cup B=\mathbb{Z}_{m}$ and $|A\cap B|=2, m-2, 4$ or $m-4$ such that $R_{A}(\overline{n})=R_{B}(\overline{n})$ for all $\overline{n}\in \mathbb{Z}_{m}$ and obtain the following results:

\begin{theorem}\label{thm1}
Let $m$ be a positive even integer. Let $A, B\subseteq \mathbb{Z}_{m}$ with $A\cup B=\mathbb{Z}_{m}$ and $|A\cap B|=2$ or $m-2$. Then $R_{A}(\overline{n})=R_{B}(\overline{n})$ for all $\overline{n}\in \mathbb{Z}_{m}$ if and only if $B=A+\overline{\frac{m}{2}}$.
\end{theorem}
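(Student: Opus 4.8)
The plan is to work with generating functions in $\mathbb{Z}[x]/(x^m-1)$, as is standard for this circle of problems. Writing $A(x)=\sum_{\overline a\in A}x^a$ and $B(x)=\sum_{\overline b\in B}x^b$, the hypothesis $R_A(\overline n)=R_B(\overline n)$ for all $\overline n$ is equivalent to $A(x)^2\equiv B(x)^2\pmod{x^m-1}$. Since $A\cup B=\mathbb{Z}_m$, we have $A(x)+B(x)=C(x)+(1+x+\cdots+x^{m-1})$ where $C(x)=\sum_{\overline c\in A\cap B}x^c$ encodes the intersection (with $|A\cap B|\in\{2,m-2\}$). From $A(x)^2\equiv B(x)^2$ we get $(A(x)-B(x))(A(x)+B(x))\equiv 0\pmod{x^m-1}$. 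The first step is to set $D(x)=A(x)-B(x)$; its coefficients are $0,\pm1$, the $+1$'s sit on $A\setminus B$ and the $-1$'s on $B\setminus A$, and $D(1)=0$ since $|A|=|B|$ (this last equality follows because $|A|+|B|=|A\cup B|+|A\cap B|=m+|A\cap B|$ and, evaluating $A(x)^2\equiv B(x)^2$ at $x=1$, $|A|^2=|B|^2$).

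The second step is to exploit the identity $D(x)(A(x)+B(x))\equiv 0$. Since $A(x)+B(x)\equiv C(x)+\frac{x^m-1}{x-1}$ and $D(x)\cdot\frac{x^m-1}{x-1}\equiv D(1)\cdot\frac{x^m-1}{x-1}\equiv 0$ (because $D(1)=0$ and $(x-1)\mid D(x)$ in $\mathbb{Z}[x]$), we obtain the cleaner relation
\[
D(x)\,C(x)\equiv 0\pmod{x^m-1}.
\]
Now the smallness of $|A\cap B|$ enters: $C(x)$ has only two terms (the case $|A\cap B|=m-2$ reduces to this by the symmetry $A\mapsto \mathbb{Z}_m\setminus A$, $B\mapsto\mathbb{Z}_m\setminus B$, which preserves the equal-representation property by an inclusion–exclusion computation of $R_A$ in terms of $R_{\mathbb{Z}_m\setminus A}$ — I would record this reduction as a preliminary lemma). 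So write $C(x)=x^u+x^v$ with $u\ne v$; then $D(x)(x^u+x^v)\equiv 0$, i.e. $D(x)(1+x^{v-u})\equiv 0\pmod{x^m-1}$ after multiplying by the unit $x^{-u}$.

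The third step analyses the equation $D(x)(1+x^t)\equiv 0\pmod{x^m-1}$ where $t=v-u\not\equiv 0$. Let $d=\gcd(t,m)$ and $t=d t'$, $m=d m'$. Reducing coefficients cyclically with step $d$ shows $1+x^t$ annihilates $D(x)$ mod $x^m-1$ iff, on each coset of $d\mathbb{Z}_m$, the restricted polynomial (a polynomial in $x^d$) is annihilated by $1+x^t$ mod $x^m-1$; since $\gcd(t',m')=1$, $1+x^{t}$ behaves on each such coset like $1+y$ on $\mathbb{Z}[y]/(y^{m'}-1)$, whose annihilator — among polynomials with coefficients in $\{0,\pm1\}$ coming from a $\pm1$-vector with zero sum — forces $m'$ to be even and the polynomial to be a $\{0,\pm1\}$-combination of $1-y$, i.e. in the original variable, forces $D(x)$ to be supported on arithmetic progressions with difference $d$ and, within each, to alternate in sign. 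The coefficient bound $|D(x)|_{\text{coeffs}}\le1$ together with $D$ having exactly $|A\setminus B|=|B\setminus A|$ positive and negative coefficients then pins the structure down; combining with $m'$ even forces $2\mid m'$, hence (tracking $d$) the shift relating $A$ and $B$ must be $\overline{m/2}$, giving $D(x)=A(x)-B(x)$ with $B=A+\overline{m/2}$, equivalently $D(x)=(1-x^{m/2})\cdot(\text{something})$, and finally $|A\cap B|=2$ forces that "something" to be a single monomial pair so that $A$ and $B$ are genuinely a translate of each other by $m/2$. The converse direction is the easy routine check: if $B=A+\overline{m/2}$ then $B(x)=x^{m/2}A(x)$, so $B(x)^2=x^mA(x)^2\equiv A(x)^2\pmod{x^m-1}$, and one separately verifies $A\cup B=\mathbb{Z}_m$ with $|A\cap B|=2$ is consistent (it need not hold for every $A$, but the theorem is a biconditional for the $A,B$ satisfying the standing hypotheses, so only this implication is needed).

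The main obstacle I anticipate is the third step: extracting from $D(x)(1+x^t)\equiv 0\pmod{x^m-1}$ the exact combinatorial shape of $A$ and $B$ when $|A\cap B|=2$ — one must rule out "scattered" solutions where $D$ is a longer alternating string along the progression of difference $d$, which would correspond to $|A\setminus B|>1$ but still $|A\cap B|=2$; this requires carefully using that $A\cap B$ has size exactly $2$ (not just that $C(x)$ has two terms) to bound $|A\setminus B|$, perhaps by also invoking the original relation $A(x)^2\equiv B(x)^2$ rather than only its consequence $D(x)C(x)\equiv0$. A clean way to organize this is to first prove $d=m/2$ (so $t'$ odd, $m'=2$) directly from parity/size constraints, after which $D(x)=(1-x^{m/2})E(x)$ for some $\{0,1\}$-polynomial $E$ with $E(x)$ and $x^{m/2}E(x)$ having disjoint supports except where forced, and then $|A\cap B|=2$ collapses $E$ to a single monomial. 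I would also keep the degenerate small-$m$ cases ($m=2,4,\dots$) in mind and check them by hand, since the structural argument implicitly assumes enough room in $\mathbb{Z}_m$.
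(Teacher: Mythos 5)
Your framework is sound, and in fact your relation $D(x)C(x)\equiv 0 \pmod{x^m-1}$ is exactly the paper's Lemma~\ref{lem2} in generating-function clothing, so steps 1 and 2 are fine. But the two places you yourself flag as delicate are where the proposal genuinely breaks. In the case $|A\cap B|=2$ you never actually establish $t=m/2$, and the route you sketch is off-track: you propose to ``bound $|A\setminus B|$'' and to collapse $E$ to ``a single monomial,'' but $|A|=|B|=\frac{m}{2}+1$ forces $|A\setminus B|=\frac{m-2}{2}$, which is large and needs no bounding, and $D(x)=(1-x^{m/2})E(x)$ with $E$ a monomial is false for $m\ge 6$. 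Your description of the annihilator of $1+y$ in $\mathbb{Z}[y]/(y^{m'}-1)$ as the $\{0,\pm1\}$-combinations of $1-y$ is also incorrect ($(1+y)(1-y)\not\equiv 0$ unless $m'\mid 2$); the annihilator is the $\mathbb{Z}$-span of the single alternating polynomial $1-y+y^2-\cdots-y^{m'-1}$, nonzero only for $m'$ even. The correct finish is simpler than anything you outline: $D(x)(1+x^t)\equiv 0$ says $D_n+D_{n-t}=0$ for all $n$, so the set of indices with $D_n=0$ is a union of cosets of $\langle\overline{t}\rangle$, each of size $m/\gcd(t,m)$; since $A\cup B=\mathbb{Z}_m$, the zero coefficients sit exactly on $A\cap B$, so there are precisely two of them, forcing $m/\gcd(t,m)=2$ (size $1$ is excluded because $t\not\equiv 0$), hence $t=\frac{m}{2}$, and then $D_{n+m/2}=-D_n$ for all $n$ is literally the statement $B=A+\overline{\frac{m}{2}}$.

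The second gap is the case $|A\cap B|=m-2$: the complementation $A'=\mathbb{Z}_m\setminus A$, $B'=\mathbb{Z}_m\setminus B$ does not reduce it to the two-term case as you claim, because $A'\cap B'=\varnothing$ and $|A'\cup B'|=2$, so the complemented pair violates the standing hypothesis $A'\cup B'=\mathbb{Z}_m$ and the identity $A'(x)+B'(x)=C'(x)+(1+x+\cdots+x^{m-1})$ on which your step 2 depends. The idea is still salvageable --- here $A'=\{\overline{b}\}$ and $B'=\{\overline{a}\}$ are singletons, and $R_{A'}=R_{B'}$ forces $\overline{2a}=\overline{2b}$, hence $\overline{b}=\overline{a}+\overline{\frac{m}{2}}$ --- but that is a separate direct argument, not the claimed reduction, and you must still check $T+\overline{\frac{m}{2}}=T$ to conclude $B=A+\overline{\frac{m}{2}}$. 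For comparison, the paper gets both cases by substituting two well-chosen values of $n$ into Lemma~\ref{lem2} (Case 1) and by expanding $R_A,R_B$ around the single differing elements (Case 2); your approach, once repaired as above, exploits the same identity globally rather than pointwise.
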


\begin{theorem}\label{thm2}
Let $m$ be a positive integer with $4|m$. Then there exist two distinct sets $A, B\subseteq \mathbb{Z}_{m}$ with $A\cup B=\mathbb{Z}_{m}, B\neq A+\overline{\frac{m}{2}}$ and $|A\cap B|=4$ or $m-4$ such that $R_{A}(\overline{n})=R_{B}(\overline{n})$ for all $\overline{n}\in \mathbb{Z}_{m}$.
\end{theorem}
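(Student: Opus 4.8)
The plan is to produce an explicit pair of sets $A,B\subseteq\mathbb{Z}_m$ by exploiting the generating-function criterion for $R_A=R_B$. Writing $f_A(x)=\sum_{\overline{a}\in A}x^{a}$ in the ring $\mathbb{Z}[x]/(x^m-1)$, the identity $R_A(\overline n)=R_B(\overline n)$ for all $\overline n$ is equivalent to $f_A(x)^2\equiv f_B(x)^2\pmod{x^m-1}$, i.e.
\[
\bigl(f_A(x)-f_B(x)\bigr)\bigl(f_A(x)+f_B(x)\bigr)\equiv 0\pmod{x^m-1}.
\]
Since $A\cup B=\mathbb{Z}_m$, we have $f_A(x)+f_B(x)=\tfrac{x^m-1}{x-1}+f_{A\cap B}(x)$, while $f_A(x)-f_B(x)=f_{A\setminus B}(x)-f_{B\setminus A}(x)$ is supported on the symmetric difference. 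So the whole problem reduces to choosing a small symmetric difference of size $8$ (so that $|A\cap B|=4$ after arranging the union to be everything) whose signed generating function kills the above product, while making sure the resulting $B$ is genuinely not the translate $A+\overline{m/2}$.

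Next I would guess the shape of the example from the known constructions (Yang--Chen and Yang--Tang handle the analogous $k=m$ and $k=2$ cases via translation by $m/2$, so the new phenomenon for $k=4$ should come from a ``second'' symmetry available only when $4\mid m$). Concretely, set $m=4t$ and try to build the symmetric difference out of two cosets of the subgroup $\overline{2t}\mathbb{Z}_m=\{\overline 0,\overline{2t}\}$ together with two cosets of the subgroup $\{\overline 0,\overline t,\overline{2t},\overline{3t}\}$, i.e. take $A\setminus B$ and $B\setminus A$ to be unions of arithmetic progressions with common difference $t$ or $2t$. One candidate is to let $A\setminus B=\{\overline{a_1},\overline{a_1+2t},\overline{a_2},\overline{a_2+2t}\}$ and $B\setminus A=\{\overline{a_1+t},\overline{a_1+3t},\overline{a_2+t},\overline{a_2+3t}\}$ for suitably chosen $a_1,a_2$; then $f_A-f_B=(x^{a_1}+x^{a_2})(1+x^{2t})(1-x^{t})=(x^{a_1}+x^{a_2})(1-x^{t})(1+x^{t})(1-x^{t})/(1-x^{t})\cdot\ldots$ — the point is that $(1+x^{2t})(1-x^t)$ shares the factor $\tfrac{x^m-1}{x-1}$ after multiplication by the complementary cyclotomic factors, so the product $(f_A-f_B)(f_A+f_B)$ vanishes mod $x^m-1$. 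I would pin down $a_1,a_2$ (and the remaining ``free'' part of $A\cap B$, which can be taken symmetric, e.g. $\{\overline 0\}$ plus whatever is forced) by direct expansion, then double-check the defining equation $R_A=R_B$ by verifying the polynomial congruence coefficient-by-coefficient, and finally exhibit one value $\overline n$ or one structural feature (e.g. $A$ is fixed by the translation $+\overline{2t}$ but $A+\overline{2t}\neq B$) showing $B\neq A+\overline{2t}=A+\overline{m/2}$.

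The main obstacle I anticipate is the bookkeeping needed to guarantee simultaneously that (i) $A\cup B=\mathbb{Z}_m$ exactly, (ii) $|A\cap B|$ is exactly $4$ (or, by complementation, $m-4$), (iii) the generating-function product really is divisible by $x^m-1$, and (iv) $B\neq A+\overline{m/2}$; items (i)--(ii) constrain how the symmetric-difference block sits inside $\mathbb{Z}_m$, and making the size come out to precisely $8$ for the symmetric difference may require adding a couple of forced elements to $A\cap B$, which then have to be checked not to disturb (iii). A clean way to sidestep most of this is to first solve the ``free'' version — find any $A_0,B_0$ with $f_{A_0}^2\equiv f_{B_0}^2$, symmetric difference of size $8$, and $B_0$ not a translate of $A_0$ — and only afterwards enlarge both sets by the \emph{same} set $C$ disjoint from $A_0\cup B_0$ to force the union to be all of $\mathbb{Z}_m$; adding a common set $C$ changes $f_A,f_B$ by the same polynomial $f_C$, hence preserves $f_A^2-f_B^2\equiv(f_{A_0}-f_{B_0})(f_{A_0}+f_{B_0}+2f_C)$, so one just needs $(f_{A_0}-f_{B_0})f_C\equiv 0$, which holds automatically once $f_{A_0}-f_{B_0}$ is already a multiple of $\tfrac{x^m-1}{x-1}$ divided by the right factor — in the construction above $f_{A_0}-f_{B_0}$ is divisible by $(1-x^t)$ and by enough of the complementary factors that any $C$ which is a union of $\{t\}$-cosets works. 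Once the algebra of Step 2 is carried out explicitly for, say, $t=1$ (i.e. $m=4$) and then seen to be uniform in $t$, the general case follows.
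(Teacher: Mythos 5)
Your algebraic framework (pass to generating functions modulo $x^m-1$, make $f_A-f_B$ divisible by $(1+x^{2t})(1-x^{t})$ where $m=4t$, and keep the rest of the structure invariant under translation by $\overline{t}$) is the right mechanism and is in substance what the paper's explicit examples exploit. But the proposal as written has a concrete numerical gap that prevents it from proving the stated theorem: since $A\cup B=\mathbb{Z}_m$, one has $|A\triangle B|=m-|A\cap B|$, so the hypothesis $|A\cap B|=4$ forces a symmetric difference of size $m-4$ (large, not small), while $|A\cap B|=m-4$ forces a symmetric difference of size exactly $4$. Your proposed symmetric difference of size $8$ (two blocks $\{\overline{a_i},\overline{a_i+2t}\}$ in $A\setminus B$ against $\{\overline{a_i+t},\overline{a_i+3t}\}$ in $B\setminus A$) yields $|A\cap B|=m-8$, which matches neither required case except when $m=12$; and your suggested base case $m=4$ cannot even carry a symmetric difference of size $8$. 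The repair is to use one block for the $|A\cap B|=m-4$ case (the paper takes $A\setminus B=\{\overline{0},\overline{2t}\}$, $B\setminus A=\{\overline{t},\overline{3t}\}$, $A\cap B=\mathbb{Z}_m\setminus\{\overline{0},\overline{t},\overline{2t},\overline{3t}\}$) and $t-1$ blocks for the $|A\cap B|=4$ case (the paper takes $A=S\cup(S+\overline{2t})$ and $B=A+\overline{t}$ with $S=\{\overline{0},\overline{1},\dots,\overline{t}\}$, so that $A\cap B=\{\overline{0},\overline{t},\overline{2t},\overline{3t}\}$); in both cases $f_A-f_B=P(x)\,(1+x^{2t})(1-x^{t})$ for an explicit $P$, exactly as you intend.

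Beyond the counting error, the proposal stops short of a proof: no explicit $a_1,a_2$ (hence no explicit $A$, $B$) are produced, the divisibility of $(f_A-f_B)(f_A+f_B)$ by $x^m-1$ is asserted but never verified (the displayed factorization trails off), and the crucial condition $B\neq A+\overline{m/2}$ is only promised, not checked. Each of these steps is routine once the correct block counts are in place, but as it stands the argument does not establish the theorem.
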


\begin{theorem}\label{thm3}
Let $m$ be a positive integer with $2\| m$. Let $A, B\subseteq \mathbb{Z}_{m}$ with $A\cup B=\mathbb{Z}_{m}$ and $|A\cap B|=4$ or $m-4$. Then $R_{A}(\overline{n})=R_{B}(\overline{n})$ for all $\overline{n}\in \mathbb{Z}_{m}$ if and only if $B=A+\overline{\frac{m}{2}}$.
\end{theorem}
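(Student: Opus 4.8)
The plan is to use generating functions in $\mathbb{Z}[\mathbb{Z}_m]$, or equivalently to work with the polynomials $A(x)=\sum_{\overline{a}\in A}x^{a}$ and $B(x)=\sum_{\overline{b}\in B}x^{b}$ modulo $x^{m}-1$. The hypothesis $R_A(\overline{n})=R_B(\overline{n})$ for all $\overline{n}$ translates into $A(x)^2\equiv B(x)^2 \pmod{x^m-1}$, hence $(A(x)-B(x))(A(x)+B(x))\equiv 0 \pmod{x^m-1}$. Writing $A\cup B=\mathbb{Z}_m$ and setting $C=A\cap B$, $D=A\setminus B$, $E=B\setminus A$, we have $A(x)-B(x)=D(x)-E(x)$ and $A(x)+B(x)=\sum_{j=0}^{m-1}x^j+C(x)$. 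Since $\sum_{j=0}^{m-1}x^j\equiv 0$ at every nontrivial $m$-th root of unity and equals $m$ at $x=1$, the divisibility condition is controlled by the behaviour of $D(x)-E(x)$ and $C(x)$ at roots of unity. The first reduction is the standard one (used already in the cited papers): evaluating at $x=1$ forces $|D|=|E|$, and if $|A\cap B|=m-4$ then $|D|=|E|=2$, while the case $|A\cap B|=4$ is handled symmetrically by passing to complements (note $R_{\mathbb{Z}_m\setminus A}$ is determined by $R_A$ since $A$ and its complement partition $\mathbb{Z}_m$). So it suffices to treat $|A\cap B|=m-4$, i.e.\ $D=\{\overline{d_1},\overline{d_2}\}$, $E=\{\overline{e_1},\overline{e_2}\}$ disjoint.

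Next I would pin down the algebraic constraint. From $(D(x)-E(x))\bigl(\sum_{j=0}^{m-1}x^j+C(x)\bigr)\equiv 0\pmod{x^m-1}$ and the fact that $D(x)-E(x)$ vanishes at $x=1$ (equal sizes), one gets that for every primitive $d$-th root of unity $\zeta$ with $d\mid m$, $d>1$, either $D(\zeta)=E(\zeta)$ or $C(\zeta)=-\sum_{j=0}^{m-1}\zeta^j=0$, the latter being automatic; more precisely the condition is $\bigl(D(\zeta)-E(\zeta)\bigr)C(\zeta)=0$ for all such $\zeta$, together with the $x=1$ identity. The key structural input now is the hypothesis $2\|m$, say $m=2m'$ with $m'$ odd. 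The plan is to show that the polynomial $x^{d_1}+x^{d_2}-x^{e_1}-x^{e_2}$, which has only four terms, can be divisible (mod $x^m-1$) by a large cyclotomic-type factor only in a very constrained way, and to combine this with the corresponding four-term structure of $C(x)=\bigl(\sum_{j}x^j\bigr)-D(x)-E(x)$ evaluated at roots of unity. Concretely, I expect to show that $D(x)-E(x)$ must be divisible by $1+x^{m'}$ in $\mathbb{Z}[x]/(x^m-1)$: the odd part forces the "obstruction" to live entirely at the $2$-part of $m$, and a four-term difference of two monomial pairs that behaves correctly at all odd-order roots of unity must have $d_1\equiv e_1+m' \pmod{m}$ and $d_2\equiv e_2+m'\pmod{m}$ (after relabeling), which is exactly $D=E+\overline{m'}=E+\overline{\tfrac{m}{2}}$.

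From $D=E+\overline{\tfrac{m}{2}}$ I would then need to upgrade to $B=A+\overline{\tfrac{m}{2}}$, i.e.\ to show the common part also satisfies $C=C+\overline{\tfrac m2}$ in the relevant sense, or rather directly that $A+\overline{\tfrac m2}=B$. Since $A=C\cup D$ and $B=C\cup E=C\cup(D+\overline{\tfrac m2})$, and $D+\overline{\tfrac m2}$ is disjoint from $C$ while $D$ is disjoint from $C$, I would show $C+\overline{\tfrac m2}=C$: this should follow by re-examining the identity $A(x)^2\equiv B(x)^2$ with the now-known relation, or more cleanly by counting — the condition $R_A=R_B$ with $B=C\cup(D+\overline{\tfrac m2})$ and $A=C\cup D$ forces, after cancelling the terms involving $D$ versus $D+\overline{\tfrac m2}$ (which pair up because squaring kills the shift by $\tfrac m2$ only partially), an identity on $C$ that, given $2\|m$, is equivalent to $\overline{\tfrac m2}+C=C$. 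The converse direction (that $B=A+\overline{\tfrac m2}$ always gives $R_A=R_B$) is immediate and standard: $R_{A+\overline{t}}(\overline{n})=R_A(\overline{n}-\overline{2t})$, and $2\cdot\tfrac m2\equiv 0$.

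The main obstacle I anticipate is the middle step: extracting from the root-of-unity conditions the precise combinatorial conclusion $D=E+\overline{\tfrac m2}$. The difficulty is that $C(x)$ is not a short polynomial, so one cannot naively say "$D(\zeta)=E(\zeta)$ for all $\zeta$"; instead one must track for which divisors $d\mid m$ the factor $C(\zeta)$ can vanish, and bound how many such conditions a four-term polynomial $D(x)-E(x)$ can simultaneously satisfy. The case $2\|m$ is exactly what makes this work — the $2$-part of $m$ is just $2$, so the only "cheap" vanishing of $C$ happens at $\zeta=-1$ and its odd-order multiples, which is what channels the whole discrepancy into a clean shift by $\overline{m/2}$. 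I would isolate this as a lemma about four-term polynomials modulo $x^{2m'}-1$ with $m'$ odd, prove it by factoring $x^{2m'}-1=(x^{m'}-1)(x^{m'}+1)$ and using that $\gcd(x^{m'}-1,x^{m'}+1)$ divides $2$, and then feed it back into the argument above.
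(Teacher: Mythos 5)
There is a genuine gap, and it sits in the case you dispose of in one line. Complementing both sets does \emph{not} interchange the two cases of the theorem: if $A\cup B=\mathbb{Z}_m$ and $|A\cap B|=4$, then $\mathbb{Z}_m\setminus A$ and $\mathbb{Z}_m\setminus B$ are \emph{disjoint} sets whose union has size $m-4$ --- a configuration not covered by the statement being proved, which requires $A\cup B=\mathbb{Z}_m$ throughout. Moreover, in the case $|A\cap B|=4$ one has $|A|=|B|=\frac m2+2$, so $D=A\setminus B$ and $E=B\setminus A$ each have $\frac m2-2$ elements: the ``four-term difference of monomials'' analysis that drives your whole argument does not apply. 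In the identity $(D(x)-E(x))\bigl(2C(x)+D(x)+E(x)\bigr)\equiv 0 \pmod{x^m-1}$ it is now $C=A\cap B$ (of size $4$) that is the short polynomial, and the condition at a nontrivial root of unity $\zeta$ reads: $D(\zeta)=E(\zeta)$ whenever $C(\zeta)\neq 0$. Controlling where the four-term $C(\zeta)$ vanishes and propagating $D(\zeta)=E(\zeta)$ elsewhere is the hard part of the theorem; the paper spends most of its proof exactly there (it first forces $A\cap B=\{\overline{r_1},\overline{r_1}+\overline{\frac m2},\overline{r_2},\overline{r_2}+\overline{\frac m2}\}$ via a parity argument, then runs a chaining argument over the cyclic orbit generated by $\overline{r_1}-\overline{r_2}$, with two subcases according to the order $l$ of that orbit, using $2\nmid l$). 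Your proposal contains no argument for this case.

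For the case $|A\cap B|=m-4$ your framework is sound and in fact cleaner than you make it: with $T=A\cap B$ and $D=\{\overline{a_1},\overline{a_2}\}$, $E=\{\overline{b_1},\overline{b_2}\}$, substituting $T(x)=\sum_j x^j-D(x)-E(x)$ into $(D-E)(2T+D+E)\equiv 0$ gives $D(\zeta)^2=E(\zeta)^2$ at every nontrivial $\zeta$, and $D(1)=E(1)=2$; together this says exactly $R_D=R_E$. A direct comparison of the supports of $R_D$ and $R_E$ for two disjoint two-element sets, using $2\|m$ (so $\overline{m/2}$ is an odd residue and $2\overline{e_1}=2\overline{d_1}+\overline{m/2}$ is impossible), yields $E=D+\overline{\frac m2}$; and then $T=\mathbb{Z}_m\setminus(D\cup E)$ is automatically invariant under translation by $\overline{\frac m2}$, so $B=A+\overline{\frac m2}$ follows with none of the extra work you anticipate needing. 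This matches the paper's Case 2 in substance. But you explicitly defer the key step (``I expect to show\dots'', ``the main obstacle I anticipate''), so even this half is a plan rather than a proof --- and the other half rests on an incorrect reduction, so the proposal as it stands does not prove the theorem.
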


\section{Lemmas}

\begin{lemma}\label{lem1}
Let $m$ be a positive even integer and $t$ be a positive integer with $t<\frac{m}{2}$. Let $A, B\subseteq \mathbb{Z}_{m}$ with $A\cup B=\mathbb{Z}_{m}$ and $|A\cap B|=2t$. If $R_{A}(\overline{n})=R_{B}(\overline{n})$ for all $\overline{n}\in \mathbb{Z}_{m}$, then $|A|=|B|=\frac{m}{2}+t$.
\end{lemma}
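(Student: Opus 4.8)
The plan is to use the standard generating-function encoding of the representation functions. For a set $S \subseteq \mathbb{Z}_m$, introduce the polynomial (viewed in $\mathbb{Z}[x]/(x^m-1)$, or equivalently as a function on $m$-th roots of unity) $S(x) = \sum_{s \in S} x^{s}$, where each residue class is represented by its least nonnegative representative. Then the generating function of $R_S$ is exactly $S(x)^2 = \sum_{\overline{n}} R_S(\overline{n}) x^{n} \pmod{x^m - 1}$. Hence the hypothesis $R_A(\overline{n}) = R_B(\overline{n})$ for all $\overline{n}$ is equivalent to the polynomial congruence $A(x)^2 \equiv B(x)^2 \pmod{x^m-1}$, i.e. $(A(x)-B(x))(A(x)+B(x)) \equiv 0 \pmod{x^m-1}$.

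First I would record the two pieces of combinatorial data that $A \cup B = \mathbb{Z}_m$ and $|A \cap B| = 2t$ give us at the level of polynomials. Writing $C = A \cap B$ and noting that $A \cup B = \mathbb{Z}_m$ forces $A(x) + B(x) = (1 + x + \cdots + x^{m-1}) + C(x)$, while $|A| + |B| = |A \cup B| + |A \cap B| = m + 2t$. Next, evaluate the congruence $A(x)^2 \equiv B(x)^2$ at $x = 1$: this gives $|A|^2 = |B|^2$, hence $|A| = |B|$ (both are nonnegative), and combined with $|A| + |B| = m + 2t$ we immediately get $|A| = |B| = \frac{m}{2} + t$.

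The only subtlety is making sure the evaluation at $x=1$ is legitimate, which it is since the congruence modulo $x^m - 1$ in particular holds after substituting any root of $x^m-1$, and $x=1$ is such a root; equivalently, summing $R_A(\overline{n}) = R_B(\overline{n})$ over all $\overline{n} \in \mathbb{Z}_m$ gives $|A|^2 = \sum_{\overline{n}} R_A(\overline{n}) = \sum_{\overline{n}} R_B(\overline{n}) = |B|^2$ directly, with no generating function needed at all. So in fact the cleanest route is purely counting: $\sum_{\overline{n} \in \mathbb{Z}_m} R_A(\overline{n})$ counts all ordered pairs $(\overline{a},\overline{a'}) \in A \times A$, which is $|A|^2$; the same for $B$; equality forces $|A| = |B|$; and inclusion–exclusion $|A| + |B| = |A \cup B| + |A \cap B| = m + 2t$ finishes it. I expect no real obstacle here — the hypothesis $t < \frac{m}{2}$ (equivalently $|A \cap B| < m$, so that $A \cup B = \mathbb{Z}_m$ is a genuine constraint and the sets are proper) is not even needed for this particular conclusion, though it is presumably used elsewhere; the whole argument is a two-line double count, and I would write it as such rather than invoking polynomials.
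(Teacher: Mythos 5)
Your proposal is correct and, once the generating-function framing collapses to the evaluation at $x=1$, it is exactly the paper's argument: summing $R_A(\overline{n})=R_B(\overline{n})$ over $\overline{n}\in\mathbb{Z}_m$ gives $|A|^2=|B|^2$, hence $|A|=|B|$, and inclusion--exclusion $|A|+|B|=m+2t$ finishes it. Your observation that $t<\frac{m}{2}$ is not needed for this conclusion is also accurate.
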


\begin{proof}
If $R_{A}(\overline{n})=R_{B}(\overline{n})$ for all $\overline{n}\in \mathbb{Z}_{m}$, then
$$|A|^{2}=\sum\limits_{\overline{n}\in \mathbb{Z}_{m}}R_{A}(\overline{n})=\sum\limits_{\overline{n}\in \mathbb{Z}_{m}}R_{B}(\overline{n})=|B|^{2}.$$
Thus $|A|=|B|$. Noting that
$$|A|+|B|=|A\cup B|+|A\cap B|=m+2t,$$
we have $|A|=|B|=\frac{m}{2}+t$.

This completes the proof of Lemma \ref{lem1}.
\end{proof}

\begin{lemma}\label{lem2}
Let $m$ be a positive even integer and $t$ be a positive integer with $t<\frac{m}{2}$. Let $A, B\subseteq \mathbb{Z}_{m}$ with $A\cup B=\mathbb{Z}_{m}$ and $A\cap B=\{\overline{r_{1}}, \overline{r_{2}}, \ldots, \overline{r_{2t}}\}$. If $R_{A}(\overline{n})=R_{B}(\overline{n})$ for all $\overline{n}\in \mathbb{Z}_{m}$, then
\begin{equation}\label{2.1}
\sum_{i=1}^{2t}\chi_{A}(n-r_{i})=t+\frac{1}{2}R_{\{\overline{r_{1}}, \overline{r_{2}}, \ldots, \overline{r_{2t}}\}}(\overline{n}).
\end{equation}
\end{lemma}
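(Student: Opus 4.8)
The plan is to exploit the generating–function identity that encodes the representation functions. For a subset $S\subseteq\mathbb{Z}_m$ write its generating polynomial in $\mathbb{Z}[x]/(x^m-1)$ as $f_S(x)=\sum_{\overline{s}\in S}x^{s}$. Then $R_S(\overline{n})$ is exactly the coefficient of $x^n$ in $f_S(x)^2$, so the hypothesis $R_A(\overline{n})=R_B(\overline{n})$ for all $\overline{n}$ is equivalent to the polynomial identity $f_A(x)^2\equiv f_B(x)^2 \pmod{x^m-1}$. First I would record the set-theoretic decomposition coming from $A\cup B=\mathbb{Z}_m$ and $A\cap B=\{\overline{r_1},\dots,\overline{r_{2t}}\}=:C$: writing $g(x)=f_C(x)=\sum_{i=1}^{2t}x^{r_i}$, and letting $h(x)=1+x+\cdots+x^{m-1}$ be the full generating polynomial of $\mathbb{Z}_m$, we have $f_A(x)+f_B(x)=h(x)+g(x)$, since every element of $\mathbb{Z}_m$ is counted once in $f_A+f_B$ except the $2t$ elements of $C$, which are counted twice.

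Next I would subtract the two squares: from $f_A^2\equiv f_B^2$ we get $(f_A-f_B)(f_A+f_B)\equiv 0\pmod{x^m-1}$, hence
\[
(f_A(x)-f_B(x))\,(h(x)+g(x))\equiv 0 \pmod{x^m-1}.
\]
The key observation is that $h(x)(f_A(x)-f_B(x))\equiv (|A|-|B|)h(x)$ modulo $x^m-1$, because $h(x)\cdot x^j\equiv h(x)$ for every $j$; and by Lemma~\ref{lem1} we have $|A|=|B|$, so this term vanishes. Therefore the identity collapses to
\[
(f_A(x)-f_B(x))\,g(x)\equiv 0 \pmod{x^m-1}.
\]
Now I would compute the coefficient of $x^n$ on the left. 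The product $f_A(x)g(x)$ has $x^n$–coefficient $\sum_{i=1}^{2t}\chi_A(n-r_i)$, and similarly for $B$; since $A\cup B=\mathbb{Z}_m$ one has $\chi_A(k)+\chi_B(k)=1+\chi_C(k)$ for every $k$, so $\chi_A(n-r_i)-\chi_B(n-r_i)=2\chi_A(n-r_i)-1-\chi_C(n-r_i)$. Summing over $i$ and using $\sum_{i=1}^{2t}\chi_C(n-r_i)=R_{C}(\overline{n})$ (the number of pairs $(\overline{r_i},\overline{r_j})$ with $\overline{r_i}+\overline{r_j}=\overline{n}$) and $\sum_{i=1}^{2t}1=2t$, the vanishing of the $x^n$–coefficient gives $2\sum_{i=1}^{2t}\chi_A(n-r_i)-2t-R_{C}(\overline{n})=0$, which rearranges to \eqref{2.1}.

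The only genuinely delicate point is the reduction modulo $x^m-1$: one must be careful that "coefficient of $x^n$" is interpreted as the sum of all coefficients of monomials $x^{n+km}$, i.e.\ work with the honest convolution over $\mathbb{Z}_m$ rather than over $\mathbb{Z}$. If one prefers to avoid generating functions entirely, the same computation can be carried out directly: count in two ways the quantity $R_{A,C}(\overline{n})-R_{B,C}(\overline{n})$, where $C=A\cap B$, using $R_A-R_B=R_{A,C}-R_{B,C}$ after expanding $R_{A,A}-R_{B,B}$ via $f_A+f_B=h+g$ and $|A|=|B|$; this is essentially the bookkeeping identity $R_A(\overline{n})-R_B(\overline{n}) = R_{A,A\cap B}(\overline{n}) - R_{B,A\cap B}(\overline{n})$ together with $R_{A,C}(\overline{n})=\sum_i\chi_A(n-r_i)$. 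Either route is short; I expect the write-up to be a few lines once the identity $f_A+f_B=h+g$ and Lemma~\ref{lem1} are in hand.
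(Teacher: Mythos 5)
Your proof is correct, but it takes a genuinely different route from the paper. The paper writes $B=(\mathbb{Z}_{m}\backslash A)\cup(A\cap B)$ and expands $R_{B}(\overline{n})$ by brute force, reducing $R_{\mathbb{Z}_{m}\backslash A}(\overline{n})$ to $R_{A}(\overline{n})$ through an explicit count of lattice points with floor functions; the computation collapses to $R_{B}(\overline{n})=m-2|A|+R_{A}(\overline{n})+2\sum_{i}(1-\chi_{A}(n-r_{i}))+R_{C}(\overline{n})$, and then $|A|=\frac{m}{2}+t$ from Lemma \ref{lem1} turns $m-2|A|$ into $-2t$. You instead encode everything in $\mathbb{Z}[x]/(x^{m}-1)$: the complementation bookkeeping becomes the one-line identity $f_{A}+f_{B}=h+g$, the hypothesis becomes $(f_{A}-f_{B})(f_{A}+f_{B})\equiv 0$, and the $h$-term dies because $h\cdot(f_{A}-f_{B})\equiv(|A|-|B|)h=0$, again by Lemma \ref{lem1}. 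The residual identity $(f_{A}-f_{B})g\equiv 0$, read off coefficientwise with $\chi_{A}(k)-\chi_{B}(k)=2\chi_{A}(k)-1-\chi_{C}(k)$ and $\sum_{i}\chi_{C}(n-r_{i})=R_{C}(\overline{n})$, gives (\ref{2.1}) at once. Both arguments lean on Lemma \ref{lem1} at exactly the same spot; what your version buys is the elimination of the error-prone floor-function computation (which in the paper silently needs $\lfloor n/2\rfloor+\lfloor(n-1)/2\rfloor=n-1$ including the boundary case $n=0$), and a formulation that visibly generalizes to any $A,B$ with $A\cup B=\mathbb{Z}_{m}$ and $|A|=|B|$, namely $R_{A}(\overline{n})-R_{B}(\overline{n})=R_{A,C}(\overline{n})-R_{B,C}(\overline{n})$ with $C=A\cap B$. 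Your caveat about interpreting coefficients as convolutions over $\mathbb{Z}_{m}$ is the right one and is handled correctly.
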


\begin{proof}
Noting that $B=(\mathbb{Z}_{m}\backslash A)\cup \{\overline{r_{1}}, \overline{r_{2}}, \ldots, \overline{r_{2t}}\}$, we have
\begin{eqnarray*}
R_{B}(\overline{n})&=& R_{\mathbb{Z}_{m}\backslash A}(\overline{n})+2R_{\mathbb{Z}_{m}\backslash A, \{\overline{r_{1}}, \overline{r_{2}}, \ldots, \overline{r_{2t}}\}}(\overline{n})+
 R_{\{\overline{r_{1}}, \overline{r_{2}}, \ldots, \overline{r_{2t}}\}}(\overline{n})\\
&=& |\{(a, a'): \overline{a}, \overline{a'}\in\mathbb{Z}_{m}\backslash A, 0\leq a, a'\leq m-1, a+a'=n \text{ or } a+a'=n+m\}|\\
&& \hspace{2mm}+2\sum\limits_{i=1}^{2t}(1-\chi_{A}(n-r_{i}))+R_{\{\overline{r_{1}}, \overline{r_{2}}, \ldots, \overline{r_{2t}}\}}(\overline{n})
\end{eqnarray*}
\begin{eqnarray*}
&=& \sum\limits_{0\leq i\leq \frac{n}{2}}(1-\chi_{A}(i))(1-\chi_{A}(n-i))+\sum\limits_{n+1\leq i\leq \frac{n+m}{2}}(1-\chi_{A}(i))(1-\chi_{A}(n-i))\\
&&\hspace{2mm} +\sum\limits_{0\leq i<\frac{n}{2}}(1-\chi_{A}(i))(1-\chi_{A}(n-i))+\sum\limits_{n+1\leq i<\frac{n+m}{2}}(1-\chi_{A}(i))(1-\chi_{A}(n-i))\\
&&\hspace{2mm} +2\sum\limits_{i=1}^{2t}(1-\chi_{A}(n-r_{i}))+R_{\{\overline{r_{1}}, \overline{r_{2}}, \ldots, \overline{r_{2t}}\}}(\overline{n})\\
&=& \sum\limits_{0\leq i\leq \frac{n}{2}}1-\sum\limits_{0\leq i\leq n}\chi_{A}(i)-\chi_{A}(\frac{n}{2})+\sum\limits_{0\leq i\leq \frac{n}{2}}\chi_{A}(i)\chi_{A}(n-i)\\
&&\hspace{2mm} +\sum\limits_{n+1\leq i\leq \frac{n+m}{2}}1-\sum\limits_{n+1\leq i\leq m-1}\chi_{A}(i)-\chi_{A}(\frac{n+m}{2})+\sum\limits_{n+1\leq i\leq \frac{n+m}{2}}\chi_{A}(i)\chi_{A}(n-i)\\
&&\hspace{2mm} +\sum\limits_{0\leq i<\frac{n}{2}}1-\sum\limits_{0\leq i\leq n}\chi_{A}(i)+\chi_{A}(\frac{n}{2})+\sum\limits_{0\leq i<\frac{n}{2}}\chi_{A}(i)\chi_{A}(n-i)\\
&&\hspace{2mm} +\sum\limits_{n+1\leq i<\frac{n+m}{2}}1-\sum\limits_{n+1\leq i\leq m-1}\chi_{A}(i)+\chi_{A}(\frac{n+m}{2})+\sum\limits_{n+1\leq i< \frac{n+m}{2}}\chi_{A}(i)\chi_{A}(n-i)\\
&&\hspace{2mm} +2\sum\limits_{i=1}^{2t}(1-\chi_{A}(n-r_{i}))+R_{\{\overline{r_{1}}, \overline{r_{2}}, \ldots, \overline{r_{2t}}\}}(\overline{n})\\
&=& \left[\frac{n}{2}\right]+1+\left[\frac{n+m}{2}\right]-n+\left[\frac{n-1}{2}\right]+1+\left[\frac{n+m-1}{2}\right]-n-2|A|
\\
&&\hspace{2mm}+R_{A}(\overline{n})+2\sum\limits_{i=1}^{2t}(1-\chi_{A}(n-r_{i}))+R_{\{\overline{r_{1}}, \overline{r_{2}}, \ldots, \overline{r_{2t}}\}}(\overline{n})\\
&=& -2t+R_{A}(\overline{n})+2\sum\limits_{i=1}^{2t}(1-\chi_{A}(n-r_{i}))+R_{\{\overline{r_{1}}, \overline{r_{2}}, \ldots, \overline{r_{2t}}\}}(\overline{n})\\
&=& 2t+R_{A}(\overline{n})-2\sum\limits_{i=1}^{2t}\chi_{A}(n-r_{i})+R_{\{\overline{r_{1}}, \overline{r_{2}}, \ldots, \overline{r_{2t}}\}}(\overline{n}).
\end{eqnarray*}
Since $R_{A}(\overline{n})=R_{B}(\overline{n})$ for all $\overline{n}\in \mathbb{Z}_{m}$, we have
$$ \sum\limits_{i=1}^{2t}\chi_{A}(n-r_{i})=t+\frac{1}{2}R_{\{\overline{r_{1}}, \overline{r_{2}}, \ldots, \overline{r_{2t}}\}}(\overline{n}).$$

This completes the proof of Lemma \ref{lem2}.
\end{proof}

\section{Proofs}

\begin{proof}[{\bf Proof of  Theorem 1.1}]
It is clear that $R_{A}(\overline{n})=R_{B}(\overline{n})$ for all $\overline{n}\in \mathbb{Z}_{m}$ if $B=A+\overline{\frac{m}{2}}$.
Now we suppose that $R_{A}(\overline{n})=R_{B}(\overline{n})$ for all $\overline{n}\in \mathbb{Z}_{m}$. Clearly, the result is true for $m=2$. Now we may assume that $m\geq 4$.

{\bf Case 1.} $|A\cap B|=2$. Let $A\cap B=\{\overline{r_{1}}, \overline{r_{2}}\}$ with $\overline{r_{1}}\neq \overline{r_{2}}$. By choosing $n=2r_{1}$ in (\ref{2.1}), we have
$$ 1+\chi_{A}(2r_{1}-r_{2})=\chi_{A}(r_{1})+\chi_{A}(2r_{1}-r_{2})=1+\frac{1}{2}R_{\{\overline{r_{1}}, \overline{r_{2}}\}}(\overline{2r_{1}})\geq \frac{3}{2}.$$
Then $\chi_{A}(2r_{1}-r_{2})=1$ and $\overline{r_{2}}=\overline{r_{1}}+\overline{\frac{m}{2}}$. Let $\overline{k}\in A$ with $\overline{k}\neq \overline{r_{1}}$ and $\overline{k}\neq \overline{r_{2}}$. By choosing $n=k+r_{1}$ in (\ref{2.1}), we have
$$ \chi_{A}(k)+\chi_{A}(k+\frac{m}{2})=1+\frac{1}{2}R_{\{\overline{r_{1}}, \overline{r_{2}}\}}(\overline{k+r_{1}})=1.$$
It follows that $B=A+\overline{\frac{m}{2}}$.

{\bf Case 2.} $|A\cap B|=m-2$. Let $A\cap B=T, A=\{\overline{a}\}\cup T, B=\{\overline{b}\}\cup T$ with $\overline{a}\neq \overline{b}$ and $\overline{a}, \overline{b}\not\in T$. For any $\overline{n}\in \mathbb{Z}_{m}$, we have
\begin{eqnarray*}
R_{A}(\overline{n})&=& R_{T}(\overline{n})+2R_{T, \{\overline{a}\}}(\overline{n})+R_{\{\overline{a}\}}(\overline{n})= R_{T}(\overline{n})+2\chi_{T}(n-a)+R_{\{\overline{a}\}}(\overline{n});\\
R_{B}(\overline{n})&=& R_{T}(\overline{n})+2R_{T, \{\overline{b}\}}(\overline{n})+R_{\{\overline{b}\}}(\overline{n})= R_{T}(\overline{n})+2\chi_{T}(n-b))+R_{\{\overline{b}\}}(\overline{n}).
\end{eqnarray*}
Then
\begin{equation}\label{3.1}
 2\chi_{T}(n-a)+R_{\{\overline{a}\}}(\overline{n})=2\chi_{T}(n-b)+R_{\{\overline{b}\}}(\overline{n}).
\end{equation}
By choosing $n=2b$ in (\ref{3.1}), we have $$2\chi_{T}(2b-a)+R_{\{\overline{a}\}}(\overline{2b})=2\chi_{T}(b)+R_{\{\overline{b}\}}(\overline{2b})=1.$$
Then $2\chi_{T}(2b-a)=0$ and $R_{\{\overline{a}\}}(\overline{2b})=1$. Thus $\overline{b}=\overline{a}+\overline{\frac{m}{2}}$. It follows that $B=A+\overline{\frac{m}{2}}$.

This completes the proof of Theorem \ref{thm1}.
\end{proof}

\begin{proof}[{\bf Proof of  Theorem 1.2}]  Let $m=4k$ with $k$ a positive integer.

{\bf Case 1.} $|A\cap B|=4$. Noting that $A\neq B$, we have $k\geq 2$. Let
\begin{eqnarray*}
A&=&\{\overline{0}, \overline{1}, \ldots, \overline{k-1}, \overline{k}\}\cup \{\overline{2k}, \overline{2k+1}, \ldots, \overline{3k-1}, \overline{3k}\},\\
B&=&\{\overline{k}, \overline{k+1}, \ldots, \overline{2k-1}, \overline{2k}\}\cup \{\overline{3k}, \overline{3k+1}, \ldots, \overline{4k-1}, \overline{0}\}.
\end{eqnarray*}
It is clear that $A, B\subseteq \mathbb{Z}_{m}$ with $A\cup B=\mathbb{Z}_{m}, B\neq A+\overline{\frac{m}{2}}$ and $A\cap B=\{\overline{0}, \overline{k}, \overline{2k}, \overline{3k}\}$. Let $S=\{\overline{0}, \overline{1}, \ldots, \overline{k-1}, \overline{k}\}$. Then $A=S\cup (S+\overline{2k})$ and $B=(S+\overline{k})\cup (S+\overline{3k})$.
Noting that
\begin{eqnarray*}
R_{S}(\overline{n})&=& R_{S+\overline{2k}}(\overline{n})=R_{S+\overline{k}, S+\overline{3k}}(\overline{n}),\\
R_{S+\overline{k}}(\overline{n})&=& R_{S, S+\overline{2k}}(\overline{n})=R_{S+\overline{3k}}(\overline{n})
\end{eqnarray*}
for all $\overline{n}\in \mathbb{Z}_{m}$,  we have
\begin{eqnarray*}
R_{A}(\overline{n})&=& R_{S}(\overline{n})+2R_{S, S+\overline{2k}}(\overline{n})+R_{S+\overline{2k}}(\overline{n})\\
&=& R_{S+\overline{k}}(\overline{n})+2R_{S+\overline{k}, S+\overline{3k}}(\overline{n})+R_{S+\overline{3k}}(\overline{n})\\
&=& R_{B}(\overline{n}).
\end{eqnarray*}

{\bf Case 2.} $|A\cap B|=m-4$. If $k=1$, then by choosing $A=\{\overline{0}, \overline{2}\}, B=\{\overline{1}, \overline{3}\}$, we have $A\cup B=\mathbb{Z}_{4}, B\neq A+\overline{2}$ and $R_{A}(\overline{n})=R_{B}(\overline{n})$ for all $\overline{n}\in \mathbb{Z}_{4}$. Now let $k\geq 2$ and
\begin{eqnarray*}
A&=&\{\overline{0}, \overline{1}, \ldots, \overline{k-1}\}\cup \{\overline{k+1}, \overline{k+2}, \ldots, \overline{3k-1}\}\cup \{\overline{3k+1}, \overline{3k+2}, \ldots, \overline{4k-1}\},\\
B&=&\{\overline{1}, \overline{2}, \ldots, \overline{2k-1}\}\cup \{\overline{2k+1}, \overline{2k+2}, \ldots, \overline{4k-1}\}.
\end{eqnarray*}
It is clear that $A\cup B=\mathbb{Z}_{m}, B\neq A+\overline{\frac{m}{2}}$ and
$$A\cap B=\{\overline{1}, \ldots, \overline{k-1}\}\cup \{\overline{k+1}, \ldots, \overline{2k-1}\}\cup \{\overline{2k+1}, \ldots, \overline{3k-1}\}\cup \{\overline{3k+1}, \ldots, \overline{4k-1}\}.$$
Noting that $R_{A\cap B, \{\overline{0}, \overline{2k}\}}(\overline{n})=R_{A\cap B, \{\overline{k}, \overline{3k}\}}(\overline{n})$ and $R_{\{\overline{0}, \overline{2k}\}}(\overline{n})=R_{\{\overline{k}, \overline{3k}\}}(\overline{n})$ for all $\overline{n}\in \mathbb{Z}_{m}$,  we have
\begin{eqnarray*}
R_{A}(\overline{n})&=& R_{\{\overline{0}, \overline{2k}\}}(\overline{n})+2R_{A\cap B, \{\overline{0}, \overline{2k}\}}(\overline{n})+R_{A\cap B}(\overline{n})\\
&=& R_{\{\overline{k}, \overline{3k}\}}(\overline{n})+2R_{A\cap B, \{\overline{k}, \overline{3k}\}}(\overline{n})+R_{A\cap B}(\overline{n})\\
&=& R_{B}(\overline{n}).
\end{eqnarray*}

This completes the proof of Theorem \ref{thm2}.
\end{proof}

\begin{proof}[{\bf Proof of  Theorem 1.3}]
It is clear that $R_{A}(\overline{n})=R_{B}(\overline{n})$ for all $\overline{n}\in \mathbb{Z}_{m}$ if $B=A+\overline{\frac{m}{2}}$.
Now we suppose that $R_{A}(\overline{n})=R_{B}(\overline{n})$ for all $\overline{n}\in \mathbb{Z}_{m}$. By $A\neq B$ and $2\|m$, we have $m\geq 6$.

{\bf Case 1.} $|A\cap B|=4$. Let $A\cap B=\{\overline{r_{1}}, \overline{r_{2}}, \overline{r_{3}}, \overline{r_{4}}\}$. By Lemma \ref{lem2}, we have
\begin{equation}\label{3.2}
\chi_{A}(n-r_{1})+\chi_{A}(n-r_{2})+\chi_{A}(n-r_{3})+\chi_{A}(n-r_{4})=2+\frac{1}{2}R_{\{\overline{r_{1}}, \overline{r_{2}}, \overline{r_{3}}, \overline{r_{4}}\}}(\overline{n}).
\end{equation}
Then $2|R_{\{\overline{r_{1}}, \overline{r_{2}}, \overline{r_{3}}, \overline{r_{4}}\}}(\overline{n})$ for any $\overline{n}\in \mathbb{Z}_{m}$. Thus $2|R_{\{\overline{r_{1}}, \overline{r_{2}}, \overline{r_{3}}, \overline{r_{4}}\}}(\overline{2r_{i}})$ for $i\in\{1, 2, 3, 4\}$. Without loss of generality, we may assume that
$\overline{r_{3}}=\overline{r_{1}}+\overline{\frac{m}{2}}$ and $\overline{r_{4}}=\overline{r_{2}}+\overline{\frac{m}{2}}$. Clearly, the result is true for $m=6$. Now
let $m=2k$ with $2\nmid k$ and $k\geq 5$. Let $$l=\min\{i\geq 2: \{\overline{r_{2}}, \overline{r_{2}}+\overline{\frac{m}{2}}\}=\{\overline{ir_{1}-(i-1)r_{2}}, \overline{ir_{1}-(i-1)r_{2}}+\overline{\frac{m}{2}}\}\}.$$
By $2\|m$, we have $3\leq l\leq k$ and $l|k$. Now we discuss the following two subcases according to $l$.

{\bf Subcase 1.1 } $l=k$. Then
$$\mathbb{Z}_{m}=\bigcup\limits_{i=1}^{k}\{\overline{ir_{1}-(i-1)r_{2}}, \overline{ir_{1}-(i-1)r_{2}}+\overline{\frac{m}{2}}\}.$$
By choosing $n-r_{2}=ir_{1}-(i-1)r_{2}$ for $i=1, 2, \ldots, k$ in (\ref{3.2}), we have
\begin{eqnarray*}
& &\chi_{A}(r_{2})+\chi_{A}(r_{2}+\frac{m}{2})+\chi_{A}(r_{1})+\chi_{A}(r_{1}+\frac{m}{2})=4,\\
& &\chi_{A}(r_{1})+\chi_{A}(r_{1}+\frac{m}{2})+\chi_{A}(2r_{1}-r_{2})+\chi_{A}(2r_{1}-r_{2}+\frac{m}{2})=3,\\
& &\chi_{A}(2r_{1}-r_{2})+\chi_{A}(2r_{1}-r_{2}+\frac{m}{2})+\chi_{A}(3r_{1}-2r_{2})+\chi_{A}(3r_{1}-2r_{2}+\frac{m}{2})=2,\\
&&\cdots\\
&& \chi_{A}((k-2)r_{1}-(k-3)r_{2})+\chi_{A}((k-2)r_{1}-(k-3)r_{2}+\frac{m}{2})\\
&&\hspace{10mm} +\chi_{A}((k-1)r_{1}-(k-2)r_{2})+\chi_{A}((k-1)r_{1}-(k-2)r_{2}+\frac{m}{2})=2,\\
&& \chi_{A}((k-1)r_{1}-(k-2)r_{2})+\chi_{A}((k-1)r_{1}-(k-2)r_{2}+\frac{m}{2})+\chi_{A}(r_{2})+\chi_{A}(r_{2}+\frac{m}{2})=3.
\end{eqnarray*}
Noting that
$$ \chi_{A}(r_{2})+\chi_{A}(r_{2}+\frac{m}{2})=\chi_{A}(r_{1})+\chi_{A}(r_{1}+\frac{m}{2})=2,$$
we have $$ \chi_{A}(ir_{1}-(i-1)r_{2})+\chi_{A}(ir_{1}-(i-1)r_{2}+\frac{m}{2})=1$$
for $i=2, 3, \ldots, k-1$. It follows that $B=A+\overline{\frac{m}{2}}$.

{\bf Subcase 1.2 } $3\leq l<k$. Then $k=ls$ with $s\geq 3$ and $2\nmid s$. Thus

$$\mathbb{Z}_{m}=\bigcup\limits_{i=1}^{l}\bigcup\limits_{j=0}^{s-1}\{\overline{ir_{1}-(i-1)r_{2}+j}, \overline{ir_{1}-(i-1)r_{2}+j}+\overline{\frac{m}{2}}\}.$$
By choosing $n-r_{2}=ir_{1}-(i-1)r_{2}$ for $i=1, 2, \ldots, l$ in (\ref{3.2}), we have
\begin{eqnarray*}
& &\chi_{A}(r_{2})+\chi_{A}(r_{2}+\frac{m}{2})+\chi_{A}(r_{1})+\chi_{A}(r_{1}+\frac{m}{2})=4,\\
& &\chi_{A}(r_{1})+\chi_{A}(r_{1}+\frac{m}{2})+\chi_{A}(2r_{1}-r_{2})+\chi_{A}(2r_{1}-r_{2}+\frac{m}{2})=3, \\
& &\chi_{A}(2r_{1}-r_{2})+\chi_{A}(2r_{1}-r_{2}+\frac{m}{2})+\chi_{A}(3r_{1}-2r_{2})+\chi_{A}(3r_{1}-2r_{2}+\frac{m}{2})\\
& & = 2+\frac{1}{2}R_{\{\overline{r_{1}}, \overline{r_{1}+\frac{m}{2}}, \overline{r_{2}}, \overline{r_{2}+\frac{m}{2}}\}}(\overline{3r_{1}-r_{2}}),\\
& &\cdots \\
& &\chi_{A}((l-1)r_{1}-(l-2)r_{2})+\chi_{A}((l-1)r_{1}-(l-2)r_{2}+\frac{m}{2})+\chi_{A}(r_{2})+\chi_{A}(r_{2}+\frac{m}{2})=3.
\end{eqnarray*}
Noting that
$$ \chi_{A}(r_{2})+\chi_{A}(r_{2}+\frac{m}{2})=\chi_{A}(r_{1})+\chi_{A}(r_{1}+\frac{m}{2})=2,$$
we have $$ \chi_{A}(ir_{1}-(i-1)r_{2})+\chi_{A}(ir_{1}-(i-1)r_{2}+\frac{m}{2})=1$$
for $i=2, \ldots, l-1$.

For any $j\in\{1, 2, \ldots, s-1\}$, by choosing $n-r_{2}=ir_{1}-(i-1)r_{2}+j$ for $i=1, 2, \ldots, l$ in (\ref{3.2}), we have
\begin{eqnarray*}
& &\chi_{A}(r_{2}+j)+\chi_{A}(r_{2}+j+\frac{m}{2})+\chi_{A}(r_{1}+j)+\chi_{A}(r_{1}+j+\frac{m}{2})=2,\\
& &\chi_{A}(r_{1}+j)+\chi_{A}(r_{1}+j+\frac{m}{2})+\chi_{A}(2r_{1}-r_{2}+j)+\chi_{A}(2r_{1}-r_{2}+j+\frac{m}{2})=2,\\
& &\chi_{A}(2r_{1}-r_{2}+j)+\chi_{A}(2r_{1}-r_{2}+j+\frac{m}{2})+\chi_{A}(3r_{1}-2r_{2}+j)\\
 & &\hspace{10mm}+\chi_{A}(3r_{1}-2r_{2}+j+\frac{m}{2})=2,\\
& &\cdots\\
& & \chi_{A}((l-1)r_{1}-(l-2)r_{2}+j)+\chi_{A}((l-1)r_{1}-(l-2)r_{2}+j+\frac{m}{2})\\
& & \hspace{10mm} +\chi_{A}(r_{2}+j)+\chi_{A}(r_{2}+j+\frac{m}{2})=2
\end{eqnarray*}
Noting that $2\nmid l$, we have
$$ \chi_{A}(ir_{1}-(i-1)r_{2}+j)+\chi_{A}(ir_{1}-(i-1)r_{2}+j+\frac{m}{2})=1$$
for $i=1, 2, \ldots, l-1$. It follows that $B=A+\overline{\frac{m}{2}}$.

{\bf Case 2.} $|A\cap B|=m-4$. Let $A\cap B=T, A=\{\overline{a_{1}}, \overline{a_{2}}\}\cup T, B=\{\overline{b_{1}}, \overline{b_{2}}\}\cup T$ with $\{\overline{a_{1}}, \overline{a_{2}}\}\cap \{\overline{b_{1}}, \overline{b_{1}}\}=\varnothing$ and $\{\overline{a_{1}}, \overline{a_{2}}, \overline{b_{1}}, \overline{b_{2}}\}\cap T=\varnothing$. For any $\overline{n}\in \mathbb{Z}_{m}$, we have
\begin{eqnarray*}
R_{A}(\overline{n})&=& R_{T}(\overline{n})+2R_{T, \{\overline{a_{1}}, \overline{a_{2}}\}}(\overline{n})+R_{\{\overline{a_{1}}, \overline{a_{2}}\}}(\overline{n})\\
&=& R_{T}(\overline{n})+2\chi_{T}(n-a_{1})+2\chi_{T}(n-a_{2})+R_{\{\overline{a_{1}}, \overline{a_{2}}\}}(\overline{n});\\
R_{B}(\overline{n})&=& R_{T}(\overline{n})+2R_{T, \{\overline{b_{1}}, \overline{b_{2}}\}}(\overline{n})+R_{\{\overline{b_{1}}, \overline{b_{2}}\}}(\overline{n})\\
&=& R_{T}(\overline{n})+2\chi_{T}(n-b_{1})+2\chi_{T}(n-b_{2})+R_{\{\overline{b_{1}}, \overline{b_{2}}\}}(\overline{n}).
\end{eqnarray*}
Then
\begin{equation}\label{3.3}
2\chi_{T}(n-a_{1})+2\chi_{T}(n-a_{2})+R_{\{\overline{a_{1}}, \overline{a_{2}}\}}(\overline{n})=2\chi_{T}(n-b_{1})+2\chi_{T}(n-b_{2})+R_{\{\overline{b_{1}}, \overline{b_{2}}\}}(\overline{n}).
\end{equation}

By choosing $n=a_{1}+a_{2}$ in (\ref{3.3}), we have
$$2\chi_{T}(a_{1}+a_{2}-b_{1})+2\chi_{T}(a_{1}+a_{2}-b_{2})+R_{\{\overline{b_{1}}, \overline{b_{2}}\}}(\overline{a_{1}}+\overline{a_{2}})=2.$$
If $\chi_{T}(a_{1}+a_{2}-b_{1})=1$, then
$$\chi_{T}(a_{1}+a_{2}-b_{2})=R_{\{\overline{b_{1}}, \overline{b_{2}}\}}(\overline{a_{1}}+\overline{a_{2}})=0.$$
Thus $\overline{a_{1}}+\overline{a_{2}}-\overline{b_{2}}\in \{\overline{b_{1}}, \overline{b_{2}}\}$ and $\overline{a_{1}}+\overline{a_{2}}\not\in \{\overline{2b_{1}}, \overline{b_{1}}+\overline{b_{2}}, \overline{2b_{2}}\}$, which is a contradiction. It follows that $\chi_{T}(a_{1}+a_{2}-b_{1})=0$. Similarly, we can get $\chi_{T}(a_{1}+a_{2}-b_{2})=0$. Therefore $R_{\{\overline{b_{1}}, \overline{b_{2}}\}}(\overline{a_{1}}+\overline{a_{2}})=2$. It means that $\overline{a_{1}}+\overline{a_{2}}=\overline{2b_{1}}=\overline{2b_{2}}$ or $\overline{a_{1}}+\overline{a_{2}}=\overline{b_{1}}+\overline{b_{2}}$.

By choosing $n=b_{1}+b_{2}$ in (\ref{3.3}), we have
$$2\chi_{T}(b_{1}+b_{2}-a_{1})+2\chi_{T}(b_{1}+b_{2}-a_{2})+R_{\{\overline{a_{1}}, \overline{a_{2}}\}}(\overline{b_{1}}+\overline{b_{2}})=2.$$
If $\chi_{T}(b_{1}+b_{2}-a_{1})=1$, then
$$\chi_{T}(b_{1}+b_{2}-a_{2})=R_{\{\overline{a_{1}}, \overline{a_{2}}\}}(\overline{b_{1}}+\overline{b_{2}})=0.$$
Thus $\overline{b_{1}}+\overline{b_{2}}-\overline{a_{2}}\in \{\overline{a_{1}}, \overline{a_{2}}\}$ and $\overline{b_{1}}+\overline{b_{2}}\not\in \{\overline{2a_{1}}, \overline{a_{1}}+\overline{a_{2}}, \overline{2a_{2}}\}$, which is a contradiction. It follows that $\chi_{T}(b_{1}+b_{2}-a_{1})=0$. Similarly, we can get $\chi_{T}(b_{1}+b_{2}-a_{2})=0$. Therefore $R_{\{\overline{a_{1}}, \overline{a_{2}}\}}(\overline{b_{1}}+\overline{b_{2}})=2$. It means that $\overline{b_{1}}+\overline{b_{2}}=\overline{2a_{1}}=\overline{2a_{2}}$ or $\overline{b_{1}}+\overline{b_{2}}=\overline{a_{1}}+\overline{a_{2}}$.

If $\overline{a_{1}}+\overline{a_{2}}=\overline{2b_{1}}=\overline{2b_{2}}$, then $\overline{b_{1}}+\overline{b_{2}}=\overline{2a_{1}}=\overline{2a_{2}}$. Thus
$\overline{a_{2}}=\overline{a_{1}}+\overline{\frac{m}{2}}, \overline{b_{2}}=\overline{b_{1}}+\overline{\frac{m}{2}}$ and $\overline{2a_{1}}+\overline{\frac{m}{2}}=\overline{2b_{1}}$, which contradicts $2\| m$. It follows that $\overline{a_{1}}+\overline{a_{2}}=\overline{b_{1}}+\overline{b_{2}}$. By choosing $n=2a_{1}$ in (\ref{3.3}), we have
\begin{equation}\label{3.4}
2\chi_{T}(2a_{1}-a_{2})+R_{\{\overline{a_{1}}, \overline{a_{2}}\}}(\overline{2a_{1}})=2\chi_{T}(2a_{1}-b_{1})+2\chi_{T}(2a_{1}-b_{2})+R_{\{\overline{b_{1}}, \overline{b_{2}}\}}(\overline{2a_{1}}).
\end{equation}

If $R_{\{\overline{a_{1}}, \overline{a_{2}}\}}(\overline{2a_{1}})=2$, then $\overline{2a_{1}}=\overline{2a_{2}}$ and $\chi_{T}(2a_{1}-a_{2})=\chi_{T}(a_{2})=0$.
Thus $R_{\{\overline{b_{1}}, \overline{b_{2}}\}}(\overline{2a_{1}})=0$ and $\chi_{T}(2a_{1}-b_{1})=\chi_{T}(2a_{1}-b_{2})=1$, which contradicts (\ref{3.4}).

If $R_{\{\overline{a_{1}}, \overline{a_{2}}\}}(\overline{2a_{1}})=1$, then $\overline{2a_{1}}\neq \overline{2a_{2}}$. By (\ref{3.4}), we have $R_{\{\overline{b_{1}}, \overline{b_{2}}\}}(\overline{2a_{1}})=1$. Then $\overline{2b_{1}}\neq\overline{2b_{2}}$ and $\overline{2a_{1}}\in\{\overline{2b_{1}}, \overline{2b_{2}}\}$.
Without loss of generality, we may assume that $\overline{2a_{1}}=\overline{2b_{1}}$. Then $\overline{b_{1}}=\overline{a_{1}}+\overline{\frac{m}{2}}$ and $\overline{b_{2}}=\overline{a_{2}}+\overline{\frac{m}{2}}$. It follows that $B=A+\overline{\frac{m}{2}}$.

This completes the proof of Theorem \ref{thm3}.
\end{proof}

\end{document}